%% This is file `elsarticle-template-1a-num.tex',
%%
%% Copyright 2009 Elsevier Ltd
%%
%% This file is part of the 'Elsarticle Bundle'.
%% ---------------------------------------------
%%
%% It may be distributed under the conditions of the LaTeX Project Public
%% License, either version 1.2 of this license or (at your option) any
%% later version.  The latest version of this license is in
%%    http://www.latex-project.org/lppl.txt
%% and version 1.2 or later is part of all distributions of LaTeX
%% version 1999/12/01 or later.
%%
%% The list of all files belonging to the 'Elsarticle Bundle' is
%% given in the file `manifest.txt'.
%%
%% Template article for Elsevier's document class `elsarticle'
%% with numbered style bibliographic references
%%
%% $Id: elsarticle-template-1a-num.tex 151 2009-10-08 05:18:25Z rishi $
%% $URL: http://lenova.river-valley.com/svn/elsbst/trunk/elsarticle-template-1a-num.tex $
%%
\documentclass[preprint,12pt]{elsarticle}

%% Use the option review to obtain double line spacing
%% \documentclass[preprint,review,12pt]{elsarticle}

%% Use the options 1p,twocolumn; 3p; 3p,twocolumn; 5p; or 5p,twocolumn
%% for a journal layout:
%% \documentclass[final,1p,times]{elsarticle}
%% \documentclass[final,1p,times,twocolumn]{elsarticle}
%% \documentclass[final,3p,times]{elsarticle}
%% \documentclass[final,3p,times,twocolumn]{elsarticle}
%% \documentclass[final,5p,times]{elsarticle}
%% \documentclass[final,5p,times,twocolumn]{elsarticle}

%% if you use PostScript figures in your article
%% use the graphics package for simple commands
%% \usepackage{graphics}
%% or use the graphicx package for more complicated commands
%% \usepackage{graphicx}
%% or use the epsfig package if you prefer to use the old commands
%% \usepackage{epsfig}

%% The amssymb package provides various useful mathematical symbols
\usepackage{amssymb}
%% The amsthm package provides extended theorem environments
\usepackage{amsthm}
\newtheorem{theorem}{Theorem}
%% The lineno packages adds line numbers. Start line numbering with
%% \begin{linenumbers}, end it with \end{linenumbers}. Or switch it on
%% for the whole article with \linenumbers after \end{frontmatter}.
%% \usepackage{lineno}

%% natbib.sty is loaded by default. However, natbib options can be
%% provided with \biboptions{...} command. Following options are
%% valid:

%%   round  -  round parentheses are used (default)
%%   square -  square brackets are used   [option]
%%   curly  -  curly braces are used      {option}
%%   angle  -  angle brackets are used    <option>
%%   semicolon  -  multiple citations separated by semi-colon
%%   colon  - same as semicolon, an earlier confusion
%%   comma  -  separated by comma
%%   numbers-  selects numerical citations
%%   super  -  numerical citations as superscripts
%%   sort   -  sorts multiple citations according to order in ref. list
%%   sort&compress   -  like sort, but also compresses numerical citations
%%   compress - compresses without sorting
%%
%% \biboptions{comma,round}

% \biboptions{}

\journal{Applied Mathematics and Computation}
\begin{document}

\begin{frontmatter}

%% Title, authors and addresses

%% use the tnoteref command within \title for footnotes;
%% use the tnotetext command for the associated footnote;
%% use the fnref command within \author or \address for footnotes;
%% use the fntext command for the associated footnote;
%% use the corref command within \author for corresponding author footnotes;
%% use the cortext command for the associated footnote;
%% use the ead command for the email address,
%% and the form \ead[url] for the home page:
%%
%% \title{Title\tnoteref{label1}}
%% \tnotetext[label1]{}
%% \author{Name\corref{cor1}\fnref{label2}}
%% \ead{email address}
%% \ead[url]{home page}
%% \fntext[label2]{}
%% \cortext[cor1]{}
%% \address{Address\fnref{label3}}
%% \fntext[label3]{}

\title{On strong homogeneity of two global optimization algorithms based on statistical models of multimodal objective functions}

%% use optional labels to link authors explicitly to addresses:
%% \author[label1,label2]{<author name>}
%% \address[label1]{<address>}
%% \address[label2]{<address>}

\author{Antanas \v{Z}ilinskas}

\address{Vilnius University\\
             Institute of Mathematics and Informatics,\\
             Vilnius, Lithuania\\
             antanas.zilinskas@mii.vu.lt}

\begin{abstract}
The implementation of global optimization algorithms, using the arithmetic of infinity, is considered. A relatively simple version of implementation is proposed for the algorithms that possess the introduced property of strong homogeneity. It is shown that the P-algorithm and the one-step Bayesian algorithm are strongly homogeneous.
%% Text of abstract
\end{abstract}

\begin{keyword}
%% keywords here, in the form: keyword \sep keyword
Arithmetic of infinity\sep Global optimization \sep Statistical models
%% MSC codes here, in the form: \MSC code \sep code
%% or \MSC[2008] code \sep code (2000 is the default)
\end{keyword}

\end{frontmatter}

%%
%% Start line numbering here if you want
%%
% \linenumbers

%% main text
%\section{}
%\label{}
\section{Introduction}
\label{sec:intro}

Global optimization problems are considered where the computation of objective function values,
using the standard computer arithmetic, is problematic because of either underflows or overflows. A perspective means for solving
such problems is the arithmetic of infinity \cite{YS08,YS09,YS10}. Besides fundamentally new problems of minimization of functions whose computation involves infinite or infinitesimal values, the arithmetic of infinity can be also very helpful for the cases where the computation of objective function values is challenging because of the involvement of numbers differing in many orders of magnitude. For example, in some problems of statistical inference \cite{AZ11,ZZ10}, the values of operands, involved in the computation of objective functions, differ by more than a factor of $10^{200}$.

The arithmetic of infinity can be applied to the optimization of challenging objective functions in two ways. First, the optimization algorithm can be implemented in the arithmetic of infinity.  Second, the arithmetic of infinity can be applied to scale the
objective function values to be suitable for processing by a conventionally implemented optimization algorithm. The second case is simpler to apply, since
the arithmetic of infinity should be applied only to the scaling of function values. If both implementation versions of the algorithm perform identically with respect to the generation of sequences of points where the objective function values are computed, the algorithm is called strongly homogeneous. In the present paper, we show that both implementation versions - of the P-algorithm and of the one-step Bayesian algorithm - are strongly homogeneous.

To be more precise, let us consider two objective functions $f(x)$ and $h(x)$, $x\in A \subseteq R^d$ differing only in scales of function values, i.e. $h(x)=af(x)+b$ where $a$ and $b$  are constants that can assume not only finite but also infinite and infinitesimal values expressed by numerals introduced in \cite{YS08,YS09}. In its turn,  $f(x)$ is defined by using the traditional finite arithmetic. The sequences of points generated by an algorithm, when applied to these functions, are denoted by $x_i,\,i=1,2,\ldots,$ and $v_i,\,i=1,2,\ldots,$ respectively. The algorithm that generates the identical sequences $x_i=v_i,\,i=1,2,\ldots,$ is called strongly homogeneous. A weaker property of algorithms is considered  in \cite{ES09,SS00}, where the algorithms that generate the identical sequences for the functions $f(x)$ and $h(x)=f(x)+b$ are called homogeneous. Since the proper scaling of function values by translation alone is not always possible, in the present paper we consider invariance of the optimization results with respect to a more general (affine) transformation of the objective function values.

\section{Description of the P-algorithm}
\label{sec:P}

Let us consider the minimization problem
\begin{equation}\label{eq;1}
   \min_{x\in A} f(x),\, A\subseteq R^d,
\end{equation}
where the multimodality of the objective function $f(x)$ is expected. Although the properties of the feasible
region are not essential in a further analysis, for the sake of explicitness, $A$ is assumed to be a hyper-rectangle.
For the arguments justifying the construction of global optimization algorithms using statistical models of objective functions, we refer to \cite{SS00,AZ82,ZZ08}.
 Global optimization algorithms based on statistical models implement the ideas of the theory of rational decision making under uncertainty \cite{TZ89}. The P-algorithm
is constructed in \cite{AZ85} stating the rationality axioms in the situation of selection of a point of current computation of the value of $f(x)$; it follows from the axioms that  a point should be selected where the  probability to improve the current best value is maximal.

To implement the P-algorithm,  Gaussian stochastic functions are used mainly
 because of their computational advantages; however such type of statistical models is justified axiomatically and by the results of a psychometric experiment \cite{TZ89,ZZ08,AZ85}. Application for a statistical model of a non-Gaussian stochastic function would imply at least serious implementation difficulties. Let  $\xi(x)$ be the Gaussian stochastic function with mean value $\mu$, variance $\sigma^2$, and correlation function $\rho(\cdot,\cdot)$. The choice of the correlation function normally is based on the supposed properties of the aimed objective functions, and the properties of the corresponding stochastic function, e.g. frequently used correlation functions are $\rho(x_i,x_j)=\exp(-c||x_i-x_j||)$, $\rho(x_i,x_j)=\exp(-c||x_i-x_j||^2)$. The parameters $\mu$ and $\sigma^2$ should be estimated using a sample of the objective function values.

Let $y_i=f(x_i)$ be the function values computed during the previous $n$ minimization steps.
By the P-algorithm \cite{TZ89,AZ85} the next function value is computed at the point of maximum probability to overpass the aspiration level $y_{on}$:
\begin{equation}\label{P}
    x_{n+1}=\arg \, \max_{x\in A}{\textbf P} \{\xi(x)\leq
    y_{on}|\xi(x_i)=y_i,\,i=1,...,n\}.
\end{equation}
 Since $\xi(x)$ is the
Gaussian stochastic function, the maximization in (\ref{P}) can be
reduced to the maximization of
\begin{eqnarray}\label{P1}
\frac{y_{on}-m_n(x|x_i,y_i)}{s_n(x|x_i,y_i)},
\end{eqnarray}
where $m_n(x|x_i,y_i)$ and $s_n^2(x|x_i,y_i)$ denote the conditional mean and conditional
variance of $\xi(x)$ with respect to $\xi(x_i)=y_i,\, i=1,...,n,$. The explicit formulae of $m_n(x|x_i,y_i)$ and $s_n^2(x|x_i,y_i)$ are presented below since they will be needed in a further analysis
\begin{eqnarray}\label{cond}
% \nonumber to remove numbering (before each equation)
\nonumber  &&m_n(x|x_i,y_i) = \mu + (y_1-\mu,\ldots,y_n-\mu) \Sigma^{-1}\Upsilon^T,\\
\nonumber  &&s_n^2(x|x_i,y_i) = \sigma^2(1- \Upsilon\Sigma^{-1}\Upsilon^T),\\
  &&\Upsilon=(\rho(x_1,x),\ldots,\rho(x_n,x)),\;\Sigma=\left(
                                                         \begin{array}{ccc}
                                                           \rho(x_1,x_1) & \ldots & \rho(x_1,x_n) \\
                                                           \ldots & \ldots & \ldots \\
                                                           \rho(x_n,x_1) & \ldots & \rho(x_n,x_n) \\
                                                         \end{array}
                                                       \right).
\end{eqnarray}

\section{Evaluation of the influence of scaling on the search by the P-algorithm}
\label{sec:SHP}

To evaluate the influence of data scaling on the whole optimization process, two objective functions are considered: $f(x)$ and $\phi (x)=a\cdot f(x)+b$, where $a$ and $b$ are constants. Let us assume that the first $n$ function values were computed for both functions at the same points $(x_i,\,i=1,...,n)$. The next points of computation of the values of $f(\cdot)$ and $\phi(\cdot)$ are denoted by $x_{n+1}$ and $v_{n+1}$. We are interested in the strong homogeneity of the P-algorithm, i.e. in the equality $x_{n+1}=v_{n+1}$.

The parameters of the stochastic function, estimated using the same method but different function values, normally are different. The estimates of $\mu$ and $\sigma^2$, obtained using the data $(x_i,y_i=f(x_i),\,i=1,...,n)$ and $(x_i,z_i=\phi(x_i),\,i=1,...,n)$,   are denoted as $\bar{\mu}, \bar{\sigma}^2$ and $\tilde{\mu}, \tilde{\sigma}^2$, respectively. It is assumed that $\tilde{\mu}=a\bar{\mu}+b$ and $\tilde{\sigma}^2=a^2\bar{\sigma}^2$; as shown below, this natural assumption is satisfied for the two most frequently used estimators.

 Obviously, the unbiased estimates of $\mu$ and of $\sigma^2$,  $\tilde{\mu}=\frac{1}{k}\sum_1^kz_i$, and $\tilde{\sigma}^2=\frac{1}{k-1}\sum_1^k(\tilde{\mu}-z_i)^2$, satisfy the assumptions made. Although those estimates are well justified only for independent observations, they sometimes (especially when only a small number ($k$) of observations is available)  are used also for rough estimation of the parameters $\mu$ and of $\sigma^2$ despite the correlation between the $\{z_i\}$.

 The maximum likelihood estimates also satisfy the assumptions:
\begin{eqnarray}\label{likl}
% \nonumber to remove numbering (before each equation)
  &&  (\tilde{\mu},\tilde{\sigma}^2)=\arg \max\limits_{\mu,\sigma^2}^{}\frac{1}{(2\pi)^{n/2} |\Sigma|^{1/2}\sigma^n}\exp\left(-\frac{(y-\mu I)\Sigma^{-1}(y-\mu I)^T}{2\sigma^2}\right),
\end{eqnarray}
where $y=(y_1,\ldots,y_n)^T$, and $I$ is the $n$ dimensional unit vector.

It is easy to show that the maximum likelihood estimates implied by (\ref{likl}) are equal to
\begin{eqnarray}
% \nonumber to remove numbering (before each equation)
 \label{likl1} \tilde{\mu} &=& \frac{\sum_{i=1}^n\sum_{i=1}^n z_i \rho(x_i,x_j)}{\sum\sum \rho(x_i,x_j)} \\
 \label{likl2} \tilde{\sigma}^2 &=& \frac{1}{n} \left((y-\tilde{\mu} I)\Sigma^{-1}(y-\tilde{\mu} I)^T\right).
\end{eqnarray}
It follows from (\ref{likl1}) and (\ref{likl2}) that $\tilde{\mu}=a\bar{\mu}+b$, and $\tilde{\sigma}^2=a^2\bar{\sigma}^2$ correspondingly.

The aspiration levels are defined depending on the scales of function values: $ y_{on}=\min\limits_{i=1,...,n} y_i-\varepsilon\bar{\sigma}$, $ z_{on}=\min\limits_{i=1,...,n} z_i-\varepsilon\tilde{\sigma}$.

\begin{theorem}
The P-algorithm, based on the Gaussian model with estimated parameters, is strongly homogeneous.
\end{theorem}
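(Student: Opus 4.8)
The plan is to show that the P-selection criterion (\ref{P1}) takes identical values, as a function of the candidate point $x$, for both objective functions, so that its maximizers --- and hence the next evaluation points --- coincide; an induction then extends the equality to the whole generated sequences.

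First, I would record the two observations that make the argument work. Because $\xi(x)$ is Gaussian, the conditional probability in (\ref{P}) equals $\Phi\bigl((y_{on}-m_n(x|x_i,y_i))/s_n(x|x_i,y_i)\bigr)$ with $\Phi$ the standard normal distribution function, and since $\Phi$ is strictly increasing the maximization of this probability is equivalent to the maximization of (\ref{P1}), exactly as stated after (\ref{P}). Moreover, the matrix $\Sigma$ and the vector $\Upsilon$ in (\ref{cond}) depend only on the correlation function and on the points $x_1,\dots,x_n,x$, and not at all on the recorded function values; therefore they are literally the same objects in the run on $f$ and in the run on $\phi$.

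Second, I would compute how the conditional moments transform. Writing $z_i=\phi(x_i)=ay_i+b$ and using the assumed relations $\tilde{\mu}=a\bar{\mu}+b$, $\tilde{\sigma}^2=a^2\bar{\sigma}^2$ (verified in the excerpt for both the unbiased and the maximum-likelihood estimators), the centred data satisfy $z_i-\tilde{\mu}=a(y_i-\bar{\mu})$. Substituting into (\ref{cond}) gives $\tilde m_n(x)=a\,\bar m_n(x)+b$ and $\tilde s_n^2(x)=a^2\,\bar s_n^2(x)$, hence $\tilde s_n(x)=a\,\bar s_n(x)$ for $a>0$ (the case relevant to minimization). Finally the aspiration levels obey $z_{on}=\min_i z_i-\varepsilon\tilde{\sigma}=a(\min_i y_i-\varepsilon\bar{\sigma})+b=a\,y_{on}+b$. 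Substituting these three identities into (\ref{P1}) written for $\phi$ yields the cancellation
\begin{eqnarray}
\frac{z_{on}-\tilde m_n(x)}{\tilde s_n(x)}=\frac{a\bigl(y_{on}-\bar m_n(x)\bigr)}{a\,\bar s_n(x)}=\frac{y_{on}-\bar m_n(x)}{\bar s_n(x)},\nonumber
\end{eqnarray}
so that the two criteria coincide pointwise in $x$, their maximizers agree, and $v_{n+1}=x_{n+1}$. Since then $z_{n+1}=\phi(v_{n+1})=af(x_{n+1})+b=ay_{n+1}+b$, the data sets remain aligned and the same computation applies at the next step; induction over $n$ gives $x_i=v_i$ for all $i$, which is strong homogeneity.

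The individual computations are routine; the real content, and the only place that needs care, is making each step legitimate within the arithmetic of infinity, where $a$ and $b$ may be infinite or infinitesimal. In particular one must check that the cancellation of the common factor $a$ and of the common summand $b$ is a valid operation on the corresponding numerals, and that $\min_i(ay_i+b)=a\min_i y_i+b$ persists --- which requires $a>0$ so that the ordering of the scaled values is preserved. Once positivity of $a$ is assumed, no division by zero or indeterminate difference of infinities arises, and the argument closes.
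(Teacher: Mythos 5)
Your proposal is correct and follows essentially the same route as the paper's own proof: substitute $z_i = ay_i+b$, $\tilde{\mu}=a\bar{\mu}+b$, $\tilde{\sigma}^2=a^2\bar{\sigma}^2$ into the criterion (\ref{P1}) and observe that $b$ cancels in the numerator while the common factor $a$ cancels between numerator and denominator, so the maximizers coincide. Your two additions --- the explicit induction over iterations and the observation that $a>0$ is needed (for $\min_i(ay_i+b)=a\min_i y_i+b$ and for $\tilde{s}_n = a\bar{s}_n$) --- are sound refinements of hypotheses the paper leaves implicit, not a different argument.
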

\begin{proof}
According to the definition of $v_{k+1}$ the following equalities are valid
\begin{eqnarray}\label{v}
% \nonumber to remove numbering (before each equation)
\nonumber &&v_{n+1}=\arg \, \max_{x\in A} \frac{z_{on}-m_n(x|x_i,z_i)}{s_n(x|x_i,z_i)}\\
   &&=\arg \, \max_{x\in A} \frac{\min\limits_{i=1,...,n}^{}z_i-\varepsilon\tilde{\sigma}-(\tilde{\mu} + (z_1-\tilde{\mu},\ldots,z_n-\tilde{\mu}) \Sigma^{-1}\Upsilon^T)}
   {\tilde{\sigma}\sqrt{(1- \Upsilon\Sigma^{-1}\Upsilon^T)}}.
\end{eqnarray}

Taking into account the relation between $z_i$ and $y_i$ and the corresponding relations between the  estimates of $\mu$ and $\sigma$,  equalities (\ref{v}) can be extended as follows
\begin{eqnarray}\label{v1}
% \nonumber to remove numbering (before each equation)
\nonumber &&v_{n+1}=\arg \, \max_{x\in A} \frac{\min\limits_{i=1,...,n}^{}ay_i-a\varepsilon\bar{\sigma}- a(y_1-\bar{\mu},\ldots,y_n-\bar{\mu}) \Sigma^{-1}\Upsilon^T}
   {a\bar{\sigma}\sqrt{(1- \Upsilon\Sigma^{-1}\Upsilon^T)}}\\
\nonumber   &&=\arg \, \max_{x\in A} \frac{\min\limits_{i=1,...,n}^{}y_i-\varepsilon\bar{\sigma}- (y_1-\bar{\mu},\ldots,y_n-\bar{\mu}) \Sigma^{-1}\Upsilon^T}
   {\bar{\sigma}\sqrt{(1- \Upsilon\Sigma^{-1}\Upsilon^T)}}\\
   &&=\arg \, \max_{x\in A} \frac{y_{on}-m_n(x|x_i,y_i)}{s_n(x|x_i,y_i)}=x_{n+1}.
\end{eqnarray}
The equality between $v_{n+1}$ and  $x_{n+1}$ means that the sequence of points generated by the P-algorithm is invariant with respect to the scaling of the objective function values. The strong homogeneity of the P-algorithm is proven.
\end{proof}

 As shown in \cite{Gut01,AZ10}, the P-algorithm and the radial basis function algorithm are equivalent under very general assumptions. Therefore the  statement on the strong homogeneity of the P-algorithm is also valid for the radial basis function algorithm.

\section{Evaluation of the influence of scaling on the search by the one-step Bayesian algorithm}
\label{sec:SHB}

Statistical models of objective functions are also used to construct Bayesian algorithms \cite{Mock72, Mock88}. Let a Gaussian stochastic function $\xi(x)$ be chosen for the statistical model as in Section~\ref{sec:P}. An implementable version of the Bayesian algorithm is the so called one-step Bayesian algorithm defined as follows:
\begin{eqnarray}\label{B}
% \nonumber to remove numbering (before each equation)
  x_{n+1} &=& \arg \max_{x \in A} \mathbf{E}\{\max(y_{on}-\xi(x),0)|\xi(x_i)=y_i, \,i=1,\ldots,n\}.
\end{eqnarray}

\begin{theorem}
The one-step Bayesian algorithm, based on the Gaussian model with estimated parameters,  is strongly homogeneous.
\end{theorem}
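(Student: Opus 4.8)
The plan is to mirror the proof of the previous theorem almost verbatim, since the one-step Bayesian criterion depends on the conditional distribution of $\xi(x)$ only through the same two quantities $m_n$ and $s_n$ that governed the P-algorithm. First I would write out the expected improvement explicitly. For a Gaussian random variable with conditional mean $m_n(x|x_i,z_i)$ and conditional standard deviation $s_n(x|x_i,z_i)$, the expectation in (\ref{B}) has the closed form
\begin{eqnarray}\label{EI}

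\nonumber \mathbf{E}\{\max(z_{on}-\xi(x),0)\} &=& (z_{on}-m_n)\,\Phi\!\left(\frac{z_{on}-m_n}{s_n}\right)\\
&& {}+ s_n\,\phi\!\left(\frac{z_{on}-m_n}{s_n}\right),
\end{eqnarray}
where $\Phi$ and $\phi$ are the standard normal distribution and density functions. This is the only new ingredient; everything else reuses the scaling relations already established.

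Next I would substitute the scaling hypotheses into the criterion for the function $\phi(x)=af(x)+b$. Using $z_i=ay_i+b$, $\tilde\mu=a\bar\mu+b$, $\tilde\sigma^2=a^2\bar\sigma^2$, and $z_{on}=\min_i z_i-\varepsilon\tilde\sigma$, exactly the same algebra that carried (\ref{v}) into (\ref{v1}) shows that the standardized quantity is invariant, $\frac{z_{on}-m_n(x|x_i,z_i)}{s_n(x|x_i,z_i)}=\frac{y_{on}-m_n(x|x_i,y_i)}{s_n(x|x_i,y_i)}$, while the conditional standard deviation scales homogeneously, $s_n(x|x_i,z_i)=a\,s_n(x|x_i,y_i)$. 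Feeding these two facts into (\ref{EI}) gives
\begin{eqnarray}\label{EIscaled}

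\mathbf{E}\{\max(z_{on}-\xi(x),0)\} = a\,\mathbf{E}\{\max(y_{on}-\xi(x),0)\},
\end{eqnarray}
since both the $(z_{on}-m_n)\Phi(\cdot)$ term and the $s_n\phi(\cdot)$ term pick up exactly one factor of $a$ and the arguments of $\Phi$ and $\phi$ are unchanged.

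Finally I would conclude that multiplying an objective by the positive constant $a$ leaves the location of the maximizer untouched, so $v_{n+1}=\arg\max_x a\,\mathbf{E}\{\max(y_{on}-\xi(x),0)\}=\arg\max_x \mathbf{E}\{\max(y_{on}-\xi(x),0)\}=x_{n+1}$, which is precisely strong homogeneity. The one genuinely new step compared with the P-algorithm proof is deriving the closed form (\ref{EI}) and verifying that it is positively homogeneous of degree one in $a$; the step I expect to be most delicate is checking the homogeneity factor cleanly, because the expected-improvement criterion is a sum of two terms rather than a single ratio, so I must confirm that \emph{both} summands scale by the same power of $a$ and that the standardized argument is genuinely invariant — if the two terms scaled differently the argmax could shift. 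I should also note the tacit assumption $a>0$, which is needed so that maximization is preserved rather than turned into minimization; this matches the implicit sign convention used for the aspiration levels.
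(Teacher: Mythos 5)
Your proposal is correct and follows essentially the same route as the paper: both arguments rest on the two scaling identities $\frac{z_{on}-m_n(x|x_i,z_i)}{s_n(x|x_i,z_i)}=\frac{y_{on}-m_n(x|x_i,y_i)}{s_n(x|x_i,y_i)}$ and $s_n(x|x_i,z_i)=a\,s_n(x|x_i,y_i)$ inherited from the P-algorithm analysis, and both conclude that the Bayesian criterion is simply multiplied by the positive constant $a$, leaving its maximizer unchanged. The only cosmetic difference is that the paper obtains the criterion by integration by parts in the single-term form $s_n(x|x_i,y_i)\int_{-\infty}^{(y_{on}-m_n)/s_n}\Pi(t)\,dt$, so the factor $a$ appears in exactly one place, whereas your two-term closed form $(z_{on}-m_n)\Phi(\cdot)+s_n\phi(\cdot)$ requires verifying that both summands scale identically --- which you do correctly, and your explicit observation that $a>0$ is tacitly required is a point the paper leaves implicit.
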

\begin{proof}
 The value of the objective function is computed by the one-step Bayesian  algorithm at the point of maximum  average improvement (\ref{B}). The formula of conditional mean in (\ref{B}) can be rewritten as follows
\begin{eqnarray}\label{B1}
% \nonumber to remove numbering (before each equation)
\nonumber  && \mathbf{E}\{\max(y_{on}-\xi(x),0)|\xi(x_i)=y_i, \,i=1,\ldots,n\}= \\
  && =\int_{-\infty}^{y_{on}}(y_{on}-t)p(t|m_n(x|x_i,y_i),s_n^2(x|x_i,y_i))dt,
\end{eqnarray}
where $p(t|\mu,\sigma^2)$ denotes the Gaussian probability density with the mean value $\mu$ and variance $\sigma^2$. For simplicity, we use in this formula and hereinafter the traditional symbol $\infty$. Obviously, when one starts to work in the framework of the infinite arithmetic \cite{YS08,YS09}, it should be substituted by an appropriate infinite number that has been defined a priori by the chosen statistical model.

Integration by parts in (\ref{B1}) results in the following formula
\begin{eqnarray}
% \nonumber to remove numbering (before each equation)
\nonumber  && \mathbf{E}\{\max(y_{on}-\xi(x),0)|\xi(x_i)=y_i, \,i=1,\ldots,n\}= \\
  &&  =s_n(x|x_i,y_i) \int_{-\infty}^{\frac{y_{on}-m_n(x|x_i,y_i)}{s_n(x|x_i,y_i)}} \Pi(t) dt,
\end{eqnarray}
where $\Pi(t)$ is the Laplace integral: $\Pi(t)=\frac{1}{2\pi} \int_{-\infty}^t \exp(-\frac{\tau^2}{2})d\tau $. From the formulae (\ref{cond}), (\ref{v1}), the equalities
\begin{eqnarray}
% \nonumber to remove numbering (before each equation)
\nonumber  && \frac{y_{on}-m_n(x|x_i,y_i)}{s_n(x|x_i,y_i)}= \frac{z_{on}-m_n(x|x_i,z_i)}{s_n(x|x_i,z_i)},\\
\nonumber  && s_n^2 (x|x_i,z_i) =a s_n^2(x|x_i,y_i),
\end{eqnarray}
follow implying the invariance of the sequence $x_1,\,x_2,\ldots ,$ generated by the one-step Bayesian algorithm with respect to the scaling of values of the objective function. The strong homogeneity of the one-step Bayesian algorithm is proven.
\end{proof}

\section{Strong homogeneity is not a universal property of global optimization algorithms}
Although the invariance of the whole optimization process with respect to affine scaling of objective function values seems very natural, not all global optimization algorithms are strongly homogeneous. For example,  the rather popular algorithm DIRECT \cite{JO93} is not strongly homogeneous. We are not going to investigate in detail the properties of DIRECT related to the scaling of objective function values. Instead an example is presented contradicting the necessary conditions of strong homogeneity.

For the sake of simplicity let us consider the one-dimension version of DIRECT. Let the feasible region (interval) be partitioned into subintervals $[a_i, b_i]$, $i=1,...,n$.  The objective function values computed at the points $c_i=(a_i+b_i)/2$ are supposed positive, $f(c_i)>0$; denote $f_{min}=\min \{f(c_1),\ldots ,f(c_n)\}$. A $j$-th subinterval is said to be potentially optimal if there exists a constant $L>0$ such that
\begin{eqnarray}
% \nonumber to remove numbering (before each equation)
\label{p}  f(c_j)-L\Delta_j &\le& f(c_i)-L\Delta_i,\;\forall i=1,...,n ,\\
\label{a}  f(c_j)-L\Delta_j &\le& f_{min}-\varepsilon |f_{min}|,
\end{eqnarray}
where $\Delta_i=(b_i-a_i)/2$, and $\varepsilon $ is a constant defining the requested relative improvement, $0<\varepsilon <1$. All potentially optimal subintervals are subdivided at the current iteration.

Let us consider the iteration where the potentially optimal $j$-th subinterval is not the longest one.  Then  $f(c_j)\le f(c_i)$ for all $c_i$ where $\Delta_j = \Delta_i$.  Otherwise there exists a constant $L$ such that
\begin{eqnarray}
% \nonumber to remove numbering (before each equation)
\label{L1}  L &\ge& \max_{i:\; \Delta_j>\Delta_i}\frac{f(c_j)-f(c_i)}{\Delta_j-\Delta_i},\\
\label{L2}  L &\le& \min_{i:\; \Delta_j<\Delta_i}\frac{f(c_i)-f(c_j)}{\Delta_i-\Delta_j},\\
\label{L3}  L&\ge&(f(c_j)-f_{min}+\varepsilon |f_{min}|)/\Delta_j.
\end{eqnarray}

The values $f(c_i)$ and $\Delta_i$ corresponding to the minimum in (\ref{L2}) are denoted as $f^+$ and $\Delta^+$ correspondingly, i.e.
\begin{equation}\label{L+}
    \min_{i:\; \Delta_j<\Delta_i}\frac{f(c_i)-f(c_j)}{\Delta_i-\Delta_j}=\frac{f^+-f(c_j)}{\Delta^+-\Delta_j}.
\end{equation}

Let the values of the function $\phi(x)=f(x)+\delta$ be computed at the points $c_i$, and assume that the following inequality $\delta> \delta_f/{\varepsilon}$ is valid, where
 \begin{equation}\label{d}
   \delta_f =  (f^+-f(c_j))\frac{\Delta_j}{\Delta^+-\Delta_j}-f(c_j)+(1-\varepsilon)f_{min}.
 \end{equation}

For the data related to $\phi(x)$ the following inequality holds:
\begin{eqnarray*}
% \nonumber to remove numbering (before each equation)
  L_-&=&(\phi(c_j)-\phi_{min}+\varepsilon |\phi_{min}|)/\Delta_j \\
 &>& (f(c_j)-f_{min})/\Delta_j
   +(\varepsilon f_{min}+ \delta_f/\Delta_j)\\
&=&\frac{f^+-f(c_j)}{\Delta^+-\Delta_j}=L_+,
\end{eqnarray*}
and a constant $L$ satisfying the inequalities $L_-\le L \le L_+$ can not exist. Therefore the $j$-th subinterval for the function $\phi(x)$ is not potentially optimal because necessary conditions (analogous to (\ref{L2}) and (\ref{L3}) for the function $f(x)$) are not satisfied.

\section{Numerical Example}
To demonstrate the strong homogeneity of the P-algorithm an example of one dimensional optimization is considered. For a statistical model the stationary Gaussian stochastic function with correlation function $\rho(t)=\exp(-5t)$ is chosen. Let the values of the first objective function (say $f(x)$) computed at the points (0, 0.2, 0.5, 0.9, 1) be equal to (-0.8, -0.9, -0.65, -0.85, -0.55), and the values of the second objective function (say $\phi(x)$) be equal to (0, -0.4, 0.6, -0.2, 0.99). The graphs of the conditional mean and conditional standard deviation for both sets of data are presented in Figure~\ref{fig:1}. In the section of Figure~\ref{fig:1} showing the conditional means, the horizontal lines are drawn at the levels $y_{o4}$ and $z_{o4}$ correspondingly.

\begin{figure}
%\sidecaption
\includegraphics[width=\textwidth]{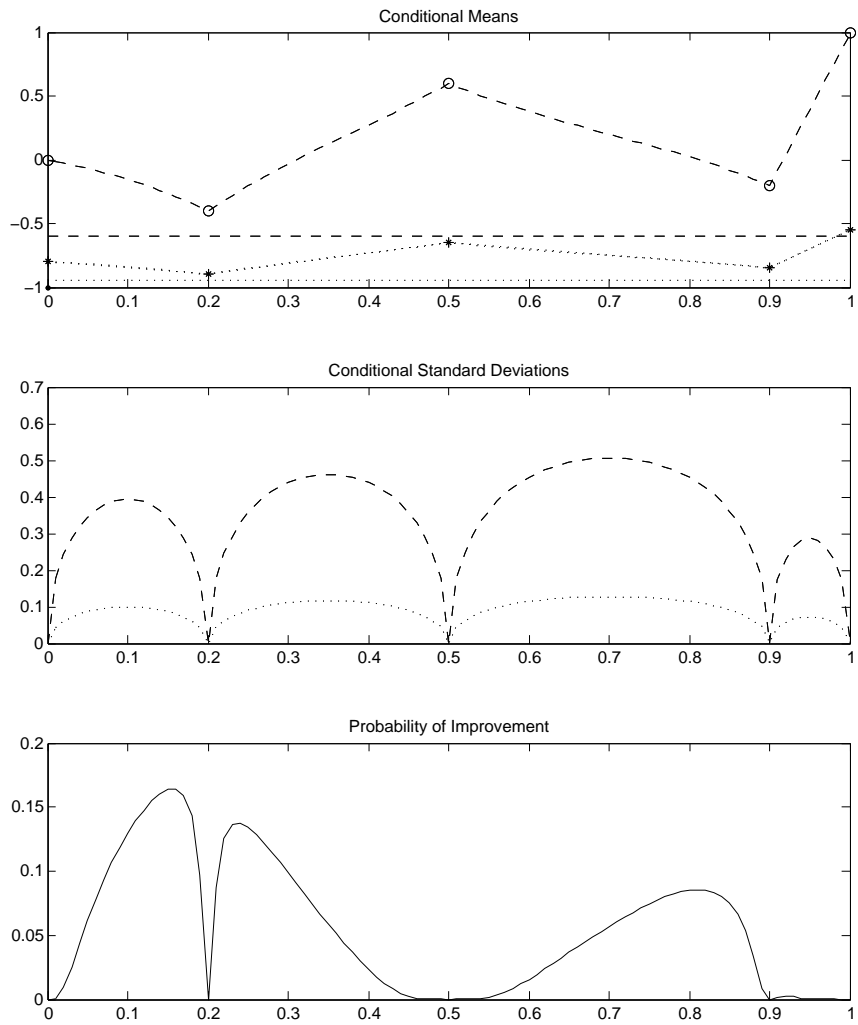}
\caption{An example of data used for planning the current iteration of the P-algorithm }
\label{fig:1}
\end{figure}

In spite of the obvious difference in the data, the functions expressing the probability of improvement for both cases coincide. Therefore, their maximizers which define the next points of function evaluations also coincide. This coincidence is implied by the strong homogeneity of the P-algorithm and the following relation: $ \phi(x)=af(x)+b$, where the values of $a,b$ up to five decimal digits are equal to $a=3.9765, b=3.1804$.

\section{Conclusions}
Both the P-algorithm and the one-step Bayesian algorithm are strongly homogeneous. The optimization results by these algorithms are invariant with respect to affine scaling of values of the objective function. The implementations of these algorithms using the conventional computer arithmetic combined with the scaling of function values, using the arithmetic of infinity, are applicable to the objective functions with either infinite  or infinitesimal values. The optimization results, obtained in this way, would be identical with the results obtained applying the implementations of the algorithms in the arithmetic of infinity.

\section{ACKNOWLEDGEMENTS}
The valuable remarks of two unknown referees facilitated a significant improvement of the presentation of results.

%% The Appendices part is started with the command \appendix;
%% appendix sections are then done as normal sections
%% \appendix

%% \section{}
%% \label{}

%% References
%%
%% Following citation commands can be used in the body text:
%% Usage of \cite is as follows:
%%   \cite{key}          ==>>  [#]
%%   \cite[chap. 2]{key} ==>>  [#, chap. 2]
%%   \citet{key}         ==>>  Author [#]

%% References with bibTeX database:

\bibliographystyle{model1a-num-names}
%\bibliography{<your-bib-database>}

%% Authors are advised to submit their bibtex database files. They are
%% requested to list a bibtex style file in the manuscript if they do
%% not want to use model1a-num-names.bst.

%% References without bibTeX database:

\end{document}